\newtheorem{theorem}{Theorem}
\newtheorem{proposition}[theorem]{Proposition}
\newtheorem{lemma}[theorem]{Lemma}
\newtheorem{corollary}[theorem]{Corollary}
\newtheorem*{definition}{Definition}
\newtheorem*{T1}{Theorem~\ref{thm:torus}}
\newtheorem*{T2}{Theorem~\ref{thm:general}}
\newtheorem*{T3}{Theorem~\ref{thm:2q}}
\begin{document}
	
\title{Three-page indices of torus links}
	
\author[U. Jang]{Useong Jang}
\address{Department of Mathematics Education, Sunchon National University, Sunchon 57922, Korea}
\email{evanbe13134@gmail.com}

\author[M. Lee]{Minseo Lee}
\address{Department of Mathematics Education, Sunchon National University, Sunchon 57922, Korea}
\email{lms030826@naver.com}

\author[H. Yoo]{Hyungkee Yoo}
\address{Department of Mathematics Education, Sunchon National University, Sunchon 57922, Korea}
\email{hyungkee@scnu.ac.kr}

\keywords{three-page indices, torus knots, torus links}
\subjclass[2020]{57K10}
	
\begin{abstract}
An arc presentation of a link is an embedding into the open book decomposition of $\mathbb{R}^3$ with a finite number of pages.
An important rule of arc presentations is that different arcs must be placed on separate pages.
In 1999, Dynnikov proposed a three-page presentation that bends this rule by restricting the total number of pages to three.
Dynnikov showed that every link admits a three-page presentation.
In this paper, we provide an alternative proof of this result.
Also we define the three-page index $\alpha_3(L)$ of a link $L$ that the minimum number of arcs needed to represent $L$ in a three-page presentation.
We examine three-page presentations for torus links, leading to the determination of the exact three-page indices for several torus links.
\end{abstract}

\maketitle

\section{Introduction} \label{sec:intro}

A \textit{link} is a collection of mutually disjoint simple closed curves in the three dimensional space $\mathbb{R}^3$.
A \textit{sublink} of a given link is the union of some of its components.
A link is said to be \textit{splittable} if there exists a sphere that separate the link into two disjoint sublinks.
Otherwise, the link is said to be \textit{non-split}.
In particular, a single component link is called a \textit{knot}.
For any two links $L_1$ and $L_2$, if there is an ambient isotopy that transforms $L_1$ into $L_2$, then we say that two links are \textit{equivalent}.
If a link is equivalent to the union of disjoint circles on the plane, then the link is said to be \textit{trivial}.

In the cylindrical coordinate system, $\mathbb{R}^3$ can be expressed as an axis and half-planes arranged in a circular manner around the axis.
This is called an \textit{open-book decomposition} of $\mathbb{R}^3$.
In~\cite{Cr}, Cromwell showed that every link can be represented as simple arcs on each of the finite number of half planes in an open-book decomposition as drawn in Figure~\ref{fig:arc}.
This presentation is called an \textit{arc presentation}.
In this context, the axis is known as the \textit{binding axis}, each half-plane that contains an arc is called a \textit{page}, and the intersection points between a given link and the binding axis are called \textit{binding points}.
Since each arc has two endpoints and each binding point connects two arcs, the number of arcs is equal to the number of binding points.
For any link $L$, the minimal number of arcs required to construct an arc presentation of $L$ is called the arc index, denoted by $\alpha(L)$.

\begin{figure}[h!]
	\includegraphics[width=0.7\textwidth]{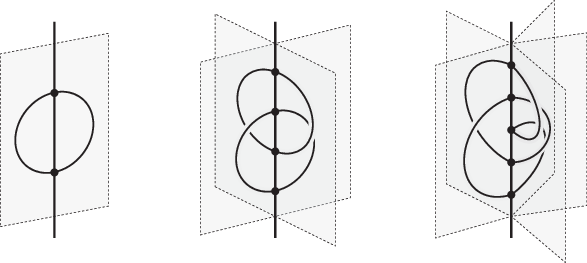}
	\caption{Arc presentations of the trivial knot, the Hopf link, and the right-handed trefoil knot}
	\label{fig:arc}
\end{figure}

An important rule of arc presentations is that different arcs must be placed on separate pages.
In this paper, we modify this rule by allowing multiple disjoint arcs to be placed on the same page, while still restricting the total number of pages.

\begin{definition}
    For some positive integer $m$, an $m$-page presentation of a link $L$ is a presentation of $L$ in the open book decomposition of $\mathbb{R}^3$, satisfying the following two conditions:
    \begin{enumerate}
        \item Exactly $m$ pages meet $L$.
        \item Each page contains properly embedded arcs.
    \end{enumerate}
    If the exact number of pages is not specified, then it is referred to as a multi-page projection.
\end{definition}

It is evident that a link $L$ has an $\alpha(L)$-page presentation, which corresponds to an arc presentation. Moreover, if a link $L$ has an $m$-page presentation, it can also have an $(m+1)$-page presentation by adding an arc on a new page. 
Our question is: what is the minimum number of pages required to represent a given link?
According to our definition, a one-page presentation is not permitted, and a two-page presentation would only represent a trivial link in the plane.
Therefore, at least three pages are required to present nontrivial links.
For example, Figure~\ref{fig:three} shows the three-page presentations of several knots and links.
In~\cite{Dyn}, Dynnikov showed that every link has a three-page presentation.
Note that the three-page presentation has been studied not only for knots and links but also for 3-valent spatial graphs and singular knots~\cite{Dyn, Dyn2, Dyn3, Kur, KV}.
Dynnikov proved this by using several local moves to transform a link diagram into a three-page presentation.
We will prove this in a different way.

\begin{figure}[h!]
	\includegraphics[width=0.7\textwidth]{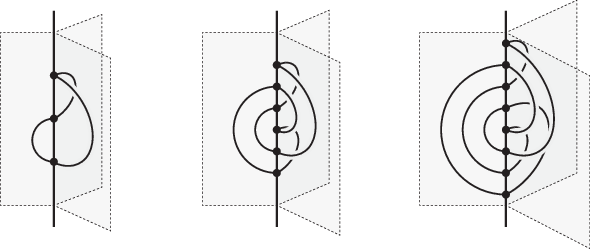}
	\caption{Three-page presentations of the trivial knot, the Hopf link, and the right-handed trefoil knot}
	\label{fig:three}
\end{figure}

Since every link has a three-page presentation, we can define a \textit{three-page index} in a similar manner to the arc index.

\begin{definition}
    For any link $L$, the three-page index $\alpha_3(L)$ of $L$ is the minimum number of arcs required to present $L$ in a three-page presentation.
\end{definition}

Equivalently, the three-page index is defined as the minimum number of binding points required for a three-page presentation.
The three-page presentation requiring the least number of arcs is called the \textit{minimal three-page presentation}.
If we distribute all the arcs from each page of a three-page presentation across different pages, then we naturally obtain an arc presentation.
Therefore, the arc index is less than or equal to the three-page index.

For any relatively prime integers $p$ and $q$,
a $(p,q)$-{\it torus knot\/} $T_{p,q}$ is a knot on an unknotted torus
that wraps around $p$ times in the meridian direction and $q$ times in the longitude direction.
In general, if $p$ and $q$ are nonzero integers with $|\gcd(p,q)| = d$, then $T_{p,q}$ consists of $d$ parallel copies of the torus knot $T_{\frac{p}{d},\frac{q}{d}}$ and is referred to as a $(p,q)$-\textit{torus link}.

The torus knot or link $T_{p,q}$ is equivalent to $T_{q,p}$. This symmetry arises because the roles of the longitudinal and meridional windings are interchangeable on the torus, resulting in identical knot structures. The mirror image of $T_{p,q}$ is given by $T_{-p,q}$, where the sign change reverses the orientation of the longitudinal winding. If either $p$ or $q$ is equal to $1$ or $-1$, then $T_{p,q}$ is trivial. Conversely, the trivial knot corresponds to these values. Note that the three-page index of a torus knot or link is invariant under symmetries of its parameters, such as the interchange of $p$ and $q$ or the reversal of their signs. Thus, without loss of generality, we may assume that $2 \leq p \leq q$.
Especially, if $p=q=n$ for some positive integer $n$, then we obtain the following theorem.

\begin{theorem} \label{thm:torus}
    For integer $n \geq 2$, three-page index of $(n,n)$-torus link $T_{n,n}$ is $4n-2$.
\end{theorem}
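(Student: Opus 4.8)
The plan is to establish the two matching bounds $\alpha_3(T_{n,n})\le 4n-2$ and $\alpha_3(T_{n,n})\ge 4n-2$ separately. Throughout I would use the combinatorial model of a three-page presentation in which the binding points sit in order along the axis, every arc joins two of them and is assigned one of the three pages, two arcs on the same page never interleave, and each component is a cycle passing through $N_i\ge 2$ binding points. Since distinct components meet the axis in distinct points, the binding points are partitioned among the components and the total arc count is $N=\sum_{i=1}^{n}N_i$. Here $T_{n,n}$ is the union of $n$ unknotted $(1,1)$-curves, any two of which form a Hopf link, so every pair of components has linking number $\pm 1$; this mutual linking is what the lower bound must exploit.

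For the upper bound I would give an explicit construction by induction on $n$. The base case $n=2$ is the Hopf link, which I would draw on three pages with $6=4\cdot 2-2$ arcs; for instance one component is a $2$-arc circle occupying two of the pages and the other is a $4$-arc circle that threads it using the third. For the inductive step I would adjoin one more parallel $(1,1)$-strand as a new component built from exactly four new arcs, arranged so that it remains on the original three pages yet acquires linking number $\pm 1$ with every component already present. This yields $\alpha_3(T_{n+1,n+1})\le \alpha_3(T_{n,n})+4$, and hence $\alpha_3(T_{n,n})\le 4n-2$.

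For the lower bound I would rewrite the target inequality as $\sum_i (4-N_i)\le 2$. Since each $N_i\ge 2$ and components with $N_i\ge 4$ only help this inequality, it suffices to bound the total saving $\sum_{i\,:\,N_i\le 3}(4-N_i)$ contributed by the cheap components (those using two or three arcs) by $2$. The structural tool is that the sub-presentation lying in any two of the three pages, opened flat into a single plane, is crossingless and hence a trivial planar diagram; consequently the linking of two components can only be produced by a genuine interaction spread across all three pages. I would turn this into the key lemma that mutually linked cheap components are severely restricted: for example, two $2$-arc components cannot have nonzero linking number on three pages, and a $2$-arc component cannot be linked with a $3$-arc component; the remaining input is a bound on how many pairwise-linked $3$-arc components can coexist. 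Because in $T_{n,n}$ every pair of components is linked, these restrictions force the saving budget $\sum_{i\,:\,N_i\le 3}(4-N_i)\le 2$, giving $N\ge 4n-2$.

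The main obstacle is precisely this lower-bound lemma: converting the two-page flattening principle into sharp numerical restrictions on how cheaply one component can be linked to the others. I would prove it by computing linking numbers directly from the combinatorial data — the order of the binding points together with the page assignment — and then running a finite case analysis over the possible page patterns of a cheap component and an explicit spanning disk for it, checking in each case that a second cheap component meets that disk with even algebraic intersection number, so its linking number cannot be $\pm 1$. I expect the conceptual content to be light and the real difficulty to lie in organizing this case analysis, using the flattening principle to keep the number of page patterns finite and to make each putative cheap linking contradictory.
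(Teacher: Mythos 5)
Your overall skeleton is sound and in fact close to the paper's: an explicit construction for the upper bound, and for the lower bound a count based on (i) every component needing at least three arcs and (ii) a cap on how many $3$-arc components can coexist. But the decisive ingredient of the lower bound is exactly what you leave unproven, and the method you sketch for it cannot work. You propose to verify, by a case analysis over page patterns and spanning disks, "that a second cheap component meets that disk with even algebraic intersection number, so its linking number cannot be $\pm 1$." That is a \emph{pairwise} non-linking statement, and for $3$-arc components it is false: the Hopf link itself is realized by two mutually linked $3$-arc components (this is precisely how $\alpha_3(T_{2,2})=6$ is achieved). So no pairwise disk-parity argument can bound the number of $3$-arc components; what is needed is an intrinsically \emph{triple-wise} statement, namely that three pairwise-linked $3$-arc components cannot coexist in a three-page presentation. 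This is the heart of the paper's proof: given two linked $3$-arc components $C_1$ and $C_2$, the three pages are cut into regions $a_i$, $b_i$, $c_i$, and a third $3$-arc component must place its arcs in $\{a_1,a_2,a_3\}$, $\{b_1,b_2,b_3\}$, or $\{c_1,c_2,c_3\}$, each of which forces it to be unlinked from $C_1$ or from $C_2$. You never state the needed cap (it must be $2$; a cap of $3$ would only yield $4n-3$), and your budget inequality $\sum_{i:\,N_i\le 3}(4-N_i)\le 2$ silently presupposes it. Once this lemma is supplied, your direct count actually gives the lower bound without the induction the paper runs, which would be a mild simplification; but as written the proposal is missing its main step.

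There is also a genuine error in your upper bound: the base-case Hopf link presentation with a $2$-arc component threaded by a $4$-arc component is impossible, by the very flattening principle you invoke. A $2$-arc component lies in two pages and bounds a disk in the flattened plane; pushing its interior into the open wedge between those two pages (which is disjoint from the link) exhibits an embedded disk missing the rest of the link, so such a component is split off and has linking number $0$ with everything — this is the paper's Lemma~\ref{lem:share}. The correct $6$-arc presentation uses three arcs on each component. This is fixable, but as stated your base case contradicts your own key principle, and your inductive step (adjoining a $4$-arc component that links all previous ones while staying on three pages) is asserted without any construction; the paper avoids both issues by exhibiting a single explicit presentation of $T_{n,n}$ with $2(n-1)+n+n=4n-2$ arcs.
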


We examine three-page presentations for torus knots or links by using their braid forms in Section\ref{sec:torus}.

\begin{theorem} \label{thm:general}
    Let $p$ and $q$ be integers with $2 \leq p \leq q$.
    Then
    $$
    \alpha_3(T_{p,q}) \leq 2p+2q-2.
    $$
\end{theorem}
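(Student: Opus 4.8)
The plan is to construct, for each pair $2 \le p \le q$, an explicit three-page presentation of $T_{p,q}$ having exactly $2p+2q-2$ binding points; by the definition of $\alpha_3$ this immediately yields $\alpha_3(T_{p,q}) \le 2p+2q-2$. Throughout I would use the braid form: $T_{p,q}$ is the closure $\hat\beta$ of the $p$-strand braid $\beta=(\sigma_1\sigma_2\cdots\sigma_{p-1})^q$. Working with $p$ (rather than $q$) strands exploits the hypothesis $p\le q$ and keeps the number of strands as small as possible. A pleasant feature is that this single construction handles the knot case $\gcd(p,q)=1$ and the link case $\gcd(p,q)=d>1$ simultaneously, since the number of components of $\hat\beta$ is automatically $d$; no separate argument for links is needed.

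The heart of the construction is to realize $\hat\beta$ on the three pages so that the binding-point count splits as $(2p-2)+2q$. First I would lay down a \emph{base block} using $2p-2$ binding points that carries the $p$ strands together with their closure: the strands are drawn as $p$ properly embedded, mutually disjoint (nested) arcs distributed over the three pages. Then I would realize each of the $q$ factors $\sigma_1\cdots\sigma_{p-1}$ (one full cyclic twist of the $p$ strands) by a single repeating \emph{twist block} costing exactly two new binding points, independently of $p$. Geometrically, a full twist sends the extreme strand once around the binding axis so that, in projection, it passes in order across the remaining $p-1$ strands; the key claim is that such a trip around the axis, realizing all $p-1$ crossings of the full twist at once, can be inserted using only two additional arcs, hence two binding points. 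Stacking $q$ twist blocks onto the base block gives $(2p-2)+2q=2p+2q-2$ binding points in total.

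Equivalently, the assembly can be organized as an induction on $q$. The base case $q=p$ is the instance realizing $T_{p,p}$ with $4p-2=2p+2p-2$ binding points, in agreement with Theorem~\ref{thm:torus}, and passing from $T_{p,q}$ to $T_{p,q+1}$ appends one twist block, i.e.\ exactly two binding points, yielding $4p-2+2(q-p)=2p+2q-2$. This bookkeeping is a useful consistency check: the arithmetic closes precisely because each increment of $q$ costs $2$ and the formula already agrees with Theorem~\ref{thm:torus} when $q=p$.

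The main obstacle is the geometric realization of the twist block, and three points need careful verification. First, on each of the three pages every arc must remain properly embedded and mutually disjoint (non-crossing), so the nesting of the $p$ strand-arcs and the routing of each inserted twist block must be arranged so that no two arcs sharing a page intersect. Second, I must confirm that the projection of the assembled presentation has exactly the crossings of $\hat\beta$ with the correct signs, so that the resulting link is genuinely $T_{p,q}$ and not some other closed braid; this is where the interleaving of binding-point heights along the axis does the real work. Third, the count must be shown to be exactly $2p+2q-2$, in particular that the closure contributes $2p-2$ rather than $2p$ and that each twist block contributes $2$ and not $3$. Once the twist block is pinned down as a rigid local gadget with these three properties, the remainder is routine assembly and counting.
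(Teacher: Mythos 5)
Your skeleton is, in fact, the same one that underlies the paper's construction: the paper's second argument also starts from the $p$-strand braid $(\sigma_1\sigma_2\cdots\sigma_{p-1})^q$, and its count decomposes exactly as yours does --- each cyclic factor corresponds to one strand looping once around the binding axis (two axis crossings, hence $2$ binding points per factor), the closure contributes $2p$ arcs, and a global saving of $2$ brings the total to $(2p-2)+2q=2p+2q-2$. Your arithmetic and consistency checks (agreement with $T_{p,p}$ and the Hopf link) are correct. The problem is that you never actually produce the object the theorem asserts to exist. The ``twist block'' is introduced as ``the key claim,'' and your last paragraph lists the three facts that would make it a proof --- disjointness of arcs sharing a page, that the projection realizes precisely the crossings of the closed braid, and that the costs are exactly $2$ per factor and $2p-2$ (not $2p$) for the base --- but none of them is established. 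For an upper bound on $\alpha_3$ the explicit presentation \emph{is} the proof; deferring the gadget defers everything, including the genuinely delicate global point that $q$ stacked blocks remain compatible: loops coming from different factors land on the same pages and must stay non-crossing as chords along the axis, and the $p-1$ strands being passed over must get past every block without creating extra binding points.

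For comparison, the paper discharges exactly this step concretely. It groups the $q$ factors into $\lfloor \frac{q}{p} \rfloor$ full twists $(\sigma_1\cdots\sigma_{p-1})^p$ plus a remainder, draws each full twist as $p$ parallel kinked strands (the same trick used for $T_{n,n}$ in Theorem~\ref{thm:torus}), chooses the binding axis to be a line $l$ through the kinks, and folds the bottom of the closed braid to the opposite side of $l$; each kink crosses $l$ twice (your ``$2$ per factor''), the closure crosses it $2p$ times, and the saving of $2$ comes from deleting an innermost, unobstructed pair of arcs --- which is precisely the justification your $2p-2$ base block is missing. To complete your write-up you would need to supply an equally explicit layout: specify which arcs lie on which of the three pages and how their endpoints interleave along the axis, so that one can check both the non-crossing condition within each page and that the resulting closed curve is $T_{p,q}$.
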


Especially, if $2p \leq q$, then we can improve the above theorem.

\begin{theorem} \label{thm:2q}
    For any integers $p$ and $q$ with $2 \leq p < 2p \leq q$,
    $$
    \alpha_3(T_{p,q}) \leq 2p+2q-3.
    $$
\end{theorem}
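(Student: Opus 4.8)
The plan is to obtain the sharper bound by modifying the three-page presentation that already proves Theorem~\ref{thm:general}, rather than constructing a new one from scratch. That presentation is read off from the braid word $(\sigma_1\sigma_2\cdots\sigma_{p-1})^q$, whose closure is $T_{p,q}$: the $q$ successive factors $\sigma_1\cdots\sigma_{p-1}$ are stacked along the binding axis, each contributing a fixed block of binding points, and the closure of the braid is completed by a family of turning arcs that run from the top of the stack back to the bottom. The bookkeeping splits the total $2p+2q-2$ naturally, with a contribution of $2q$ coming two binding points at a time from the $q$ repeated factors and a contribution of $2p-2$ coming from the strands and the closure. My first step is therefore to write out this presentation explicitly, recording for each binding point which page its two incident arcs lie on and the height interval each arc spans, so that the non-crossing (laminar) condition on each of the three pages can be checked directly.

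The second step is to exploit the hypothesis $q \ge 2p$ to locate a removable binding point. When $q$ is at least $2p$, the stack of factors contains a sub-block of consecutive, mutually parallel strands that is long enough to carry no crossing arc on one of the three pages over a whole interval of heights. I would use this clear interval to slide a turning arc: two arcs meeting at a single binding point $b$ — one entering $b$ from below and one leaving $b$ above — can be replaced, through an ambient isotopy supported in the clear region, by a single properly embedded arc on one page that bypasses $b$ entirely. This removes $b$ together with one arc, so the count drops from $2p+2q-2$ to $2p+2q-3$. The role of the inequality $2p\le q$ is precisely to guarantee that the clear interval is long enough for the slide to terminate on the correct side without being obstructed by a crossing from an adjacent factor.

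The third step is to verify that the modified diagram is still a legitimate three-page presentation of the same link. I must confirm that after removing $b$ the arcs on the affected page remain pairwise disjoint (their height intervals stay nested or disjoint), that the other two pages are untouched, and that all three pages still meet the link, so that the presentation does not degenerate to a two-page one, which by the discussion in the introduction could only carry a trivial link. Since the modification is realized by an ambient isotopy in $\mathbb{R}^3$, the underlying link type is unchanged and remains $T_{p,q}$, yielding $\alpha_3(T_{p,q}) \le 2p+2q-3$.

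I expect the main obstacle to lie in the non-crossing verification of the second and third steps: sliding the turning arc across the clear interval is intuitively clear, but one must check carefully that no arc already present on the target page forces a crossing, and that the endpoints of the merged arc land on binding points of the correct relative height. Pinning down the exact length of the clear interval produced by the repeated factors, and showing that it becomes available precisely when $q \ge 2p$, is the quantitative heart of the argument; the boundary case $p=2$ and the behaviour near the closure seam will likely need separate treatment to ensure the slide does not collide with the return arcs.
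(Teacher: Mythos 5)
Your proposal has a genuine gap, and it sits exactly at the step on which the whole theorem rests. You plan to take the $2p+2q-2$ presentation from Theorem~\ref{thm:general} and delete one binding point by sliding the two arcs meeting there into a single properly embedded arc, asserting that the hypothesis $q \ge 2p$ produces a ``clear interval'' that makes the slide unobstructed. But you never identify which binding point is removable, never exhibit the clear interval, and never connect its existence to the inequality $q \ge 2p$; you yourself flag this as ``the quantitative heart of the argument'' and leave it open. As written, the proposal reduces the theorem to an unproved claim essentially as strong as the theorem itself: the presentation built in the proof of Theorem~\ref{thm:general} is a fixed object, and nothing in your sketch rules out the possibility that it admits no such destabilization at all. (Your counting is consistent --- each binding point joins exactly two arcs, so removing one binding point removes one arc --- but the existence of the merge is the entire issue, not the arithmetic.)

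The paper's proof is a genuinely different construction, not a modification of the earlier one. It starts from the $q$-strand braid word $(\sigma_1\sigma_2\cdots\sigma_{q-1})^{p}$, rewritten as $(\sigma_p\cdots\sigma_1)(\sigma_{p+1}\cdots\sigma_2)\cdots(\sigma_{q-1}\cdots\sigma_{q-p})(\sigma_{q-p}\cdots\sigma_{q-1})^{p}$, fixes the first strand, wraps the remaining $p-1$ strands around it as kinked curves, closes the braid with $q-p$ pairs of endpoints turning left and $p$ pairs turning right, and then flips the kinked block below a line $l$. The hypothesis $q \ge 2p$ enters not through any clear interval in an existing diagram but to make this wrapping and closure fit together: the right $p$ strands must flow along the kinked arc, and the front-page arc count contains the term $q-2p$, which must be nonnegative. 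The three pages are then read off from $l$ using the observation that every crossing on $l$ is a coherent overcrossing, and the bound follows from the direct count $(q-1)+(q-1)+(2p-1)=2p+2q-3$. If you want to salvage your route, you would need to exhibit the destabilizing move explicitly inside the Theorem~\ref{thm:general} presentation and prove it is unobstructed precisely when $q \ge 2p$; that verification is likely to be at least as much work as the paper's direct construction.
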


In~\cite{Cr}, Cromwell proposes an arc presentation for the $(p,q)$-torus link $T_{p,q}$ involving $|p|+|q|$ arcs.
That is, the arc index of the $(p,q)$-torus link is less than or equal to $|p|+|q|$.
In~\cite{Mat}, Matsuda demonstrated that equality holds when $p$ and $q$ are relatively prime.
This induces that the arc index of the nontrivial $(p,q)$-torus link is $|p|+|q|$.
Thus Theorem~\ref{thm:torus} and Theorem~\ref{thm:2q} induce the following corollaries.

\begin{corollary} \label{cor:general}
    For any nonzero integers $p$ and $q$,
    $$
    \alpha_3(T_{p,q}) \leq 2\alpha(T_{p,q})-2.
    $$
\end{corollary}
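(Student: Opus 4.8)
The plan is to obtain the bound by combining the upper estimate of Theorem~\ref{thm:general} with the known value of the arc index of a torus link, so that the corollary reduces to a one-line substitution once the right normalization is in place. Since $T_{p,q}$ is equivalent both to $T_{q,p}$ and to $T_{-p,q}$, and since both $\alpha$ and $\alpha_3$ are unchanged by an ambient isotopy, each side of the claimed inequality is invariant under these parameter symmetries. I would therefore begin by reducing to the standard range $2\le p\le q$, which simultaneously guarantees that $T_{p,q}$ is a nontrivial torus link and hence that the arc-index formula recalled in the excerpt is available.

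With $2\le p\le q$ fixed, I would record that $\alpha(T_{p,q})=|p|+|q|=p+q$, the value stated for nontrivial torus links just before the corollary. Theorem~\ref{thm:general} then gives the chain
$$
\alpha_3(T_{p,q})\le 2p+2q-2=2(p+q)-2=2\,\alpha(T_{p,q})-2 ,
$$
and undoing the reduction (that is, writing $p+q=|p|+|q|=\alpha(T_{p,q})$ for the original parameters) yields the statement for all admissible nonzero $p,q$.

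Because the argument is purely substitutional, there is no analytic difficulty; the obstacles are entirely ones of bookkeeping. The point that most needs care is the domain of validity of the arc-index formula: it holds for nontrivial torus links, so the reduction to $2\le p\le q$ tacitly restricts attention to the case $|p|,|q|\ge 2$, and I would check that the trivial parameter values (where $|p|=1$ or $|q|=1$) are correctly excluded rather than silently miscounted. I would also verify explicitly that $\alpha_3$ descends to an invariant of the isotopy class, so that the interchange of $p$ and $q$ and the reversal of their signs really do preserve $\alpha_3(T_{p,q})$.

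To connect the corollary with the two theorems invoked in the surrounding text, I would finally comment on sharpness. Theorem~\ref{thm:torus} shows that $\alpha_3(T_{n,n})=4n-2=2\,\alpha(T_{n,n})-2$, so the inequality is attained with equality along the diagonal and cannot be improved as a uniform statement. Conversely, in the range $2p\le q$ Theorem~\ref{thm:2q} gives $\alpha_3(T_{p,q})\le 2p+2q-3=2\,\alpha(T_{p,q})-3$, one better than the corollary, which explains why the corollary is stated with the weaker but uniform constant $2$.
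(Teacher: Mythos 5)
Your proof coincides with the paper's (implicit) argument: reduce by the parameter symmetries to $2 \le p \le q$, substitute Matsuda's value $\alpha(T_{p,q}) = p+q$ for nontrivial torus links into Theorem~\ref{thm:general}, and rewrite $2p+2q-2$ as $2\alpha(T_{p,q})-2$; note that the paper's citation of Theorem~\ref{thm:torus} here is a slip, and Theorem~\ref{thm:general} (which you use) is the one actually needed. Your caveat about trivial parameters is well taken and worth making explicit: for $|p|=1$ or $|q|=1$ the inequality genuinely fails, since the unknot has $\alpha_3 = 3$ but $2\alpha - 2 = 2$ under the paper's definitions, so the hypothesis ``nonzero'' in the corollary should really be ``nontrivial'' --- a flaw in the statement itself that neither your argument nor the paper's can repair, only exclude.
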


\begin{corollary} \label{cor:2q}
    For any integers $p$ and $q$ with $|q| \geq 2|p|>0$,
    $$
    \alpha_3(T_{p,q}) \leq 2\alpha(T_{p,q})-3.
    $$
\end{corollary}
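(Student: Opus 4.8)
The plan is to combine the known value of the arc index of a torus link with the upper bound supplied by Theorem~\ref{thm:2q}, after first exploiting the symmetry of the parameters to reduce to a normalized range. Since both $\alpha_3$ and $\alpha$ are invariant under the interchange $p \leftrightarrow q$ and under the sign reversals $p \mapsto -p$ and $q \mapsto -q$, the hypothesis $|q| \geq 2|p| > 0$ can be rewritten, without loss of generality, as $2 \leq p$ together with $q \geq 2p$. Here the assumption $p \geq 2$ is what guarantees that $T_{p,q}$ is nontrivial, which is essential: the arc index formula $\alpha(T_{p,q}) = |p| + |q|$ quoted above (Cromwell's upper bound together with Matsuda's matching lower bound) applies only to nontrivial torus links, and indeed for $|p|=1$ the link is trivial and the stated inequality would fail.

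First I would record that, in the normalized range, $\alpha(T_{p,q}) = p + q$, so that the target inequality becomes
$$\alpha_3(T_{p,q}) \leq 2(p+q) - 3 = 2p + 2q - 3.$$
Next I would verify that the hypotheses of Theorem~\ref{thm:2q} are exactly met: that theorem requires $2 \leq p < 2p \leq q$, and the conditions $p \geq 2$ and $q \geq 2p$ deliver precisely this, the strict inequality $p < 2p$ being automatic for $p \geq 2$. Applying Theorem~\ref{thm:2q} then gives $\alpha_3(T_{p,q}) \leq 2p + 2q - 3$, which is the desired bound after substituting $\alpha(T_{p,q}) = p+q$.

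Since the argument reduces to a single substitution once the normalization is in place, there is no genuinely hard step; the only point requiring care is the symmetry reduction, ensuring that $|q| \geq 2|p| > 0$ together with nontriviality lands us inside the hypothesis range $2 \leq p < 2p \leq q$ of Theorem~\ref{thm:2q}. I would therefore state the symmetry reduction explicitly at the outset, citing the invariance of the three-page index under symmetries of $(p,q)$ noted in the introduction, and then close the argument in one line.
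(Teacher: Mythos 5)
Your proof is correct and is essentially the paper's own derivation: the paper states this corollary without a separate proof, as an immediate consequence of Theorem~\ref{thm:2q} combined with the Cromwell--Matsuda fact that $\alpha(T_{p,q}) = |p|+|q|$ for nontrivial torus links, which is exactly your one-line substitution after normalizing to $2 \leq p < 2p \leq q$. Your caveat about $|p|=1$ is also accurate and worth keeping: as literally stated, the hypothesis $|q| \geq 2|p| > 0$ admits $|p|=1$, where $T_{p,q}$ is trivial with $\alpha = 2$ and $\alpha_3 = 3$, so the claimed bound $\alpha_3 \leq 2\alpha - 3 = 1$ fails; the corollary (like Corollary~\ref{cor:general}) implicitly assumes nontriviality, i.e.\ $|p| \geq 2$, and your normalization makes that assumption explicit.
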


This paper is organized as follows.
In Section~\ref{sec:exist}, we suggest an alternating proof for Dynnikov's result by using maximal overpasses.
In Section~\ref{sec:torus}, we transform the braid form of $T_{p,q}$ into a three-page presentation, thereby proving Theorem~\ref{thm:torus}.

\section{Alternative Proof of Dynnikov's Result} \label{sec:exist}

In a link diagram, an \textit{overpass} is a strand that passes over at least one crossing and never passes under any crossing.
If an overpass cannot be extended any longer, then it is called a \textit{maximal overpass}.
Now we prove the following theorem:

\begin{theorem}[Theorem 1 of \cite{Dyn}] \label{thm:exist}
    Every link has a three-page presentation.
\end{theorem}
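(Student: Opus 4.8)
The plan is to start from an arbitrary diagram $D$ of $L$, read off its maximal overpasses, and build the three pages directly from that decomposition rather than from a sequence of local moves. First I would record the structural features of the decomposition. Split $D$ into its maximal overpasses together with the complementary \emph{underpasses}, the maximal strands that pass under at every crossing they meet. The observation that makes everything work is that two maximal overpasses can never cross: at a crossing one of the two strands passes under, which is impossible if both lie on overpasses. The same argument shows that two underpasses never cross. Consequently every crossing of $D$ is an overpass crossing \emph{over} an underpass, and the overpasses (respectively the underpasses) form a family of pairwise disjoint arcs in the plane. The points at which an overpass meets an underpass are the shared endpoints of these arcs, and they are the natural candidates for the binding points.

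Next I would set up the three-page book: a binding axis $\ell$ with three half-planes meeting along it, together with the convention that an arc on one page passes over an arc on the cyclically next page. I would place all of the transition points on $\ell$ in the cyclic order in which they are met while traversing $L$, so that each overpass and each underpass becomes an arc joining two binding points. Assigning overpasses and underpasses to pages whose cyclic order realizes ``over'' then reproduces each crossing of $D$ as an overpass passing over an underpass, and the remaining page is kept to separate arcs of the same type that would otherwise be forced to meet.

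The hard part will be exactly this separation. Although the overpasses are pairwise disjoint in the plane, once their endpoints are pinned to $\ell$ two of them may become \emph{interleaved} as chords of $\ell$, and an interleaved pair cannot be drawn as disjoint arcs inside a single page; the same issue arises for the underpasses. The core of the argument is therefore to show that three pages always suffice to carry out the assignment while the cyclic over/under rule still recovers every crossing. I expect to do this by induction on the number of crossings: I would locate an \emph{outermost} overpass, one that may be isotoped to the boundary of the current configuration, place it on a free page together with the underpass it crosses, delete the crossings it accounts for, apply the inductive hypothesis to the simpler diagram, and finally reinsert the removed arc on the third page without creating a same-page intersection. An alternative would be to first isotope $D$ into a normal form in which the overpasses are stacked along $\ell$ and the page assignment can be written down explicitly.

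Finally I would check that the construction meets the definition of a three-page presentation: every occupied page carries only properly embedded, pairwise disjoint arcs, and exactly three pages meet $L$. Treating the degenerate case of a crossingless diagram separately, which yields the trivial link and is already presentable on three pages, then completes the proof that every link has a three-page presentation.
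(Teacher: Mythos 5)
Your starting point is the same as the paper's (the decomposition of a diagram into maximal overpasses and complementary understrands, and the observation that two overpasses, or two understrands, never cross), but the framework you then impose contains a fatal constraint: you insist that the binding points are exactly the transition points, so that each maximal overpass and each underpass is realized as a \emph{single} arc joining two binding points. This cannot work in general, and the paper's own results show why. If $D$ is a diagram realizing the bridge number $b = br(L)$, your construction would output a three-page presentation with exactly $2b$ arcs ($b$ overpasses plus $b$ underpasses). But every three-page presentation of $L$ has at least $br(L)$ arcs on \emph{each} page (Proposition~\ref{prop:bridge}), hence at least $3b$ arcs in total (Corollary~\ref{cor:bridge}); since $2b < 3b$, no such presentation of a nontrivial link exists. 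Concretely, the two-bridge diagram of the trefoil would give you $4$ arcs, while any three-page presentation of the trefoil needs at least $6$. So the ``interleaving'' difficulty you flag is not a technical hurdle that an induction can clear: within your framework it is unresolvable. The induction you sketch also has ill-defined steps on its own terms --- an ``outermost'' overpass is not defined before the overpasses have been arranged along a line, a single overpass generally crosses \emph{several} underpasses (so ``the underpass it crosses'' is not one arc), deleting an overpass leaves a tangle rather than a link diagram to which the inductive hypothesis could apply, and nothing controls the inductively produced presentation so that the deleted arc can be reinserted disjointly.

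The repair --- and this is what the paper actually does --- is to allow \emph{more} binding points than transition points. Perform a planar isotopy so that all maximal overpasses lie along one line $\ell$ (possible precisely because they are pairwise disjoint arcs in the plane; overlapping overpasses are nested), and push the overpasses into the half-plane perpendicular to the diagram along $\ell$. The understrands stay in the original plane, but now each of them crosses $\ell$ transversally once for every crossing of the diagram it passes under, and each such intersection is declared a new binding point. Thus an underpass is subdivided into several arcs that alternate between the two coplanar half-planes, while all overpasses sit on the third, vertical page. This subdivision is exactly what dissolves the interleaving problem --- no underpass has to be realized as a single chord --- and each crossing of $D$ is realized automatically by an overpass arc spanning over the binding point where the understrand pierces the axis. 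No induction, and no combinatorial ``cyclic over'' convention between pages, is needed; the over/under information is carried by the geometry of the embedding. (One also handles the degenerate cases as the paper does: split links by placing presentations of non-split sublinks side by side along the axis, and crossingless diagrams directly.)
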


\begin{proof}[Alternative proof of Theorem~\ref{thm:exist}]
Let $L$ be a link.
If every non-split link has a three-page presentation, then the three-page presentation of a splittable link can be described simply by arranging the three-page presentations of its non-split sublinks in sequence.
Thus we assume that $L$ is non-split.

Consider a link diagram for $L$.
If there are no maximal overpasses, $L$ is a trivial knot and has a three-page presentation consisting of three arcs.
If there is a maximal overpass, we perform a planar isotopy so that every overpass lies along the same line, as shown in Figure~\ref{fig:bridge}.
Note that if two or more overpasses overlap on this line, they are considered nested.
In the process of rearranging the overpasses, the non-overpass strands become properly embedded arcs in the half-planes on both sides of the line where the overpass is placed.
By pushing the overpasses into a half-plane perpendicular to the original plane, we obtain a three-page presentation of $L$.
\end{proof}

\begin{figure}[h!]
	\includegraphics[width=0.65\textwidth]{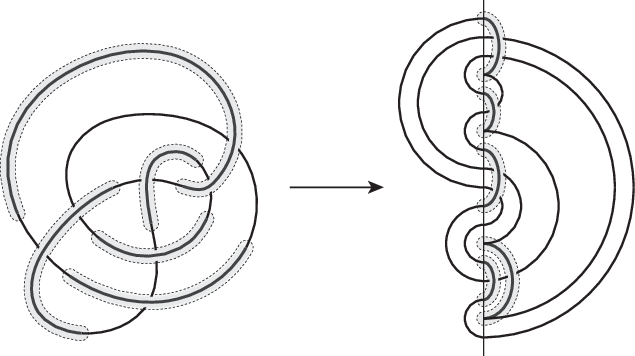}
	\caption{All maximal overpasses on the same page due to a planar isotopy}
	\label{fig:bridge}
\end{figure}

The \textit{bridge number} $br(L)$ of a link $L$ is the minimum number of overpasses in any possible link diagram for $L$.
Note that in the three-page representation, maximal overpasses are achieved by arranging two pages on a common plane while positioning the third page vertically.
Thus, we obtain the following proposition.

\begin{proposition} \label{prop:bridge}
Each page of three-page presentation of link $L$ has at least $br(L)$ arcs.
\end{proposition}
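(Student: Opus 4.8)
The plan is to exploit the observation recorded just before the proposition: any single page of a three-page presentation can be made vertical while the remaining two pages are flattened into one common plane. Fix a three-page presentation of $L$ and an arbitrary one of its three pages, say $P$. I will show that the number of arcs on $P$ is at least $br(L)$; since the labelling of the pages is arbitrary, the same argument applied to each page gives the statement for all three.

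First I would make the geometric reduction precise, essentially reversing the construction in Figure~\ref{fig:bridge}. Place $P$ vertically and unfold the other two pages $P'$ and $P''$ into a single horizontal plane, so that the binding axis becomes a line $\ell$ splitting that plane into the two half-planes carrying the arcs of $P'$ and $P''$. Projecting orthogonally onto this plane yields a diagram $D$ of $L$. The arcs of $P'$ and $P''$ lie in opposite open half-planes and meet only at binding points on $\ell$, so they produce no crossings among themselves; every crossing of $D$ is therefore between an arc of $P$, which sits above the plane, and an arc of $P'$ or $P''$, and at each such crossing the arc of $P$ is the overstrand. Hence the over-strands of $D$ are exactly the arcs of $P$.

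Next I would count maximal overpasses of $D$. Each maximal overpass is a maximal run of consecutive over-strands, hence a union of one or more arcs of $P$, and distinct maximal overpasses use disjoint collections of these arcs. Thus the number of maximal overpasses of $D$ is at most the number of arcs on $P$. Since the bridge number is the minimum, over all diagrams of $L$, of the number of maximal overpasses, we obtain $br(L) \le \#\{\text{maximal overpasses of } D\} \le \#\{\text{arcs on } P\}$, which is the desired inequality.

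The main obstacle is the careful verification of the geometric reduction: confirming that flattening produces a genuine diagram in which every crossing has $P$ on top. Two points need attention, namely nested arcs of $P$ (which project onto $\ell$ and may overlap without creating transverse crossings) and arcs carrying no crossings at all. I expect both to be resolved by a small planar isotopy or perturbation, together with the remark that each phenomenon can only merge or delete overpasses and hence only decrease the overpass count, preserving the direction of the inequality. The remaining steps are routine bookkeeping.
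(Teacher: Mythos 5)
Your proposal is correct and follows essentially the same approach as the paper: the paper's justification is precisely the observation that flattening two pages into a common plane and keeping the third vertical makes the arcs of the vertical page into the overstrands of a diagram, so $br(L)$ is bounded above by the number of arcs on that (arbitrary) page. Your write-up simply makes explicit the perturbation and overpass-counting details that the paper leaves implicit.
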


The above proposition implies the following corollary.

\begin{corollary} \label{cor:bridge}
$3 \, br(L) \leq \alpha_3(L)$.
\end{corollary}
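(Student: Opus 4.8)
The plan is to deduce the corollary directly from Proposition~\ref{prop:bridge} by summing the per-page arc bounds over the three pages. Since $\alpha_3(L)$ is defined as a minimum, it suffices to show that \emph{every} three-page presentation of $L$ uses at least $3\,br(L)$ arcs, and then pass to the infimum over all such presentations.

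First I would fix an arbitrary three-page presentation of $L$. By the definition of an $m$-page presentation with $m=3$, exactly three pages meet $L$, and each of these contains properly embedded arcs; write $a_1$, $a_2$, and $a_3$ for the numbers of arcs on the three pages, so that the total arc count equals $a_1+a_2+a_3$. Next I would apply Proposition~\ref{prop:bridge} to each page separately, obtaining $a_i \geq br(L)$ for $i=1,2,3$. Summing these three inequalities gives $a_1+a_2+a_3 \geq 3\,br(L)$, so the chosen presentation uses at least $3\,br(L)$ arcs.

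Finally, because this lower bound holds for every three-page presentation and $\alpha_3(L)$ is by definition the minimum total arc count over all such presentations, I would conclude $\alpha_3(L) \geq 3\,br(L)$, as claimed. There is no genuine obstacle here, since Proposition~\ref{prop:bridge} already supplies the per-page bound; the only point to confirm is that the total arc count of a three-page presentation is exactly the sum of the arc counts on its three pages. This is immediate from the definition, as the pages are disjoint half-planes and each arc lies in exactly one page, so no arc is counted twice and none is omitted.
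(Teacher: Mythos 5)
Your proof is correct and follows exactly the route the paper intends: Proposition~\ref{prop:bridge} gives the per-page bound of $br(L)$ arcs, summing over the three pages bounds every three-page presentation below by $3\,br(L)$, and minimality of $\alpha_3(L)$ finishes the argument. The paper leaves this deduction implicit ("The above proposition implies the following corollary"), and your write-up simply spells out the same steps.
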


Note that the bridge number of the Hopf link $T_{2,2}$ is two. Therefore, a three-page presentation of the Hopf link requires at least six arcs. As shown in Figure~\ref{fig:three}, the three-page presentation of the Hopf link consists of exactly six arcs. Thus, equality holds when $L$ is the Hopf link.

\section{Three-page indices for ($n$, $n$)-torus links} \label{sec:torus2}

In this section, we prove Theorem~\ref{thm:torus}. Before the proof, we observe the exceptional case as follow:

\begin{lemma} \label{lem:share}
If a pair of arcs in a three-page presentation of a link $L$ shares two common endpoints, then it is splittable.
\end{lemma}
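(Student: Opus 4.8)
The plan is to show that the two given arcs actually constitute an entire unknotted component of $L$, and then that this component splits off because it bounds a disk whose interior can be pushed into a region of $\mathbb{R}^3$ containing no part of $L$.

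First I would record a purely combinatorial observation. Since every binding point is an endpoint of exactly two arcs, if arcs $a$ and $b$ share both of their endpoints $A$ and $B$, then $a$ and $b$ are the only arcs meeting $A$ and $B$. Tracing the component, $a$ runs from $A$ to $B$ and is joined at $B$ to $b$, which returns to $A$; hence $K := a \cup b$ is a single, complete component of $L$, and it is a simple closed curve lying in the union of the (at most two) pages carrying $a$ and $b$. Next I would exploit the global geometry of a three-page presentation. The three pages $P_1,P_2,P_3$ are half-planes sharing the binding axis, and they cut $\mathbb{R}^3\setminus(\text{axis})$ into three open dihedral wedges $V_{12},V_{23},V_{31}$, where $V_{ij}$ has $P_i$ and $P_j$ as its two faces and does not meet the third page. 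Because $L$ lies entirely on $P_1\cup P_2\cup P_3$, every open wedge is disjoint from $L$. The key point is that the two pages carrying $a$ and $b$ — whether distinct or coincident — are both faces of a common wedge $V$ (if $a,b$ lie on a single page $P_i$, take either wedge having $P_i$ as a face); thus $K$ lies on the boundary faces of $\overline{V}$.

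Then I would produce the spanning disk. On $\partial V = P_i\cup P_j$, a bent plane homeomorphic to $\mathbb{R}^2$, the Jordan--Schoenflies theorem gives a disk $D_0$ with $\partial D_0 = K$; when $a$ and $b$ share a page this is simply the bigon between them. Finally, I would push the interior of $D_0$ off the pages into the open wedge $V$ to obtain an embedded disk $D$ with $\partial D = K$ and $\mathrm{int}(D)\subset V$. Since $V\cap L=\emptyset$, this yields $D\cap L = \partial D = K$. A regular neighborhood of $D$ is then a $3$-ball meeting $L$ only in $K$, so its boundary sphere separates $K$ from $L\setminus K$; hence $L$ is splittable (in the degenerate case $L=K$ the component is just the unknot).

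The main obstacle is that the obvious spanning disk sitting inside the pages need not avoid $L$: other arcs on the same page, or other binding points on the axis between $A$ and $B$, may lie in its interior. The resolution — and the step needing care — is the push into the empty wedge $V$: I must verify that the two relevant pages really co-bound a wedge containing no part of $L$, and that the normal push off $\partial V$ can be arranged so that $D$ stays embedded with its interior landing in the open wedge (realizing $D$ as the graph of a nonnegative height function over $D_0$, vanishing on $K$, does this). Treating the ``same page'' and ``two different pages'' cases uniformly, via the observation that any two of the three pages are faces of a common $L$-free wedge, keeps the argument short.
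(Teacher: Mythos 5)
Your proof is correct and takes essentially the same route as the paper's: the two arcs form a circle in the union of the two pages, the Jordan curve theorem provides a spanning disk, and pushing its interior into the $L$-free region between the pages yields a sphere (the boundary of a neighborhood of the disk) that splits $L$. Your explicit identification of the empty dihedral wedge and the height-function push-off simply make precise what the paper states as ``avoiding intersections with other arcs,'' and your parenthetical treatment of the degenerate case $L=K$ is an extra care the paper omits.
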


\begin{proof}
Let $\beta$ and $\beta'$ be a pair of arcs with two common endpoints.
Denote the page containing $\beta$ by $P$ and the page containing $\beta'$ by $Q$.
Then $\beta$ and $\beta'$ together form a circle on the plane $\overline{P \cup Q}$, which bounds a disk according to the Jordan curve theorem.
We slightly push the interior of this disk into the space between pages $P$ and $Q$, avoiding intersections with other arcs.
The boundary of a sufficiently small neighborhood around this disk splits $L$ into two components.
\end{proof}

Since every torus link is a non-split link, Lemma~\ref{lem:share} tells us that each component of torus knot has at least three arcs in three-page presentation.
We already observe that the three-page index $\alpha_3(T_{2,2})$ of the Hopf link is exactly 6.
Now we recall Theorem~\ref{thm:torus}, which covers the general case.

\begin{T1}
    For integer $n \geq 2$, three-page index of $(n,n)$-torus link $T_{n,n}$ is $4n-2$.
\end{T1}  

\begin{proof}
Let $n$ be an integer greater than or equal to 2.
First, we show that there is a three-page presentation of $T_{n,n}$ with $4n-2$ arcs.
Consider the $T_{n,n}$ on the unknotted torus as drawn in Figure~\ref{fig:T_nn} (a).
If we convert the full twist into parallel kinked curves, it becomes the form shown in Figure~\ref{fig:T_nn} (b).
Take a binding axis along the thin dotted line in the figure, and we obtain the three-page presentation as shown in Figure~\ref{fig:T_nn} (c).
In this presentation, there are $2(n-1)+n+n=4n-2$ arcs.
Note that, in Figure~\ref{fig:T_nn}, the sublink consisting of the bold component and the dotted component is the Hopf link.

\begin{figure}[h!]
	\includegraphics[width=0.7\textwidth]{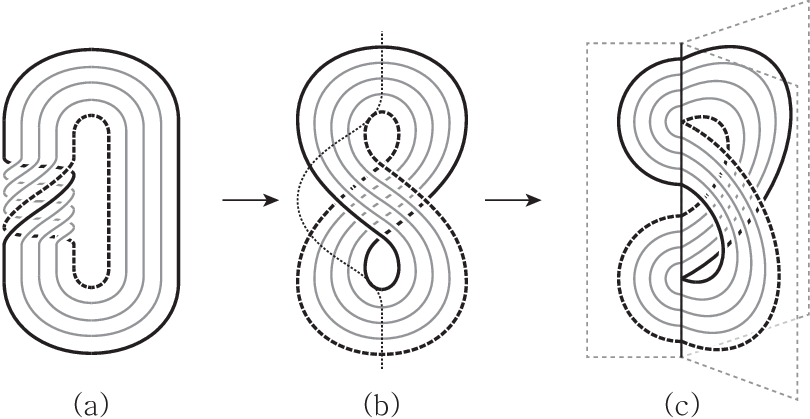}
	\caption{Three-page presentation of $T_{n,n}$ for $n\geq2$}
	\label{fig:T_nn}
\end{figure}

Now, we prove the theorem by mathematical induction.
Since $\alpha_3(T_{2,2})=6$, the base case is established.
Assume that the statement holds for some integer $m \geq 2$, so that $\alpha_3(T_{m,m})=4m-2$.
Consider the minimal three-page presentation of $T_{m+1,m+1}$.
By Lemma~\ref{lem:share}, each component of a torus link has at least three arcs in the three-page presentation.
Using the previous result, we obtain that
$$4m+1 = \alpha_3(T_{m,m})+3 \leq \alpha_3(T_{m+1,m+1}) \leq 4m+2.$$

Suppose that $\alpha_3(T_{m+1,m+1})=4m+1$.
Then there are three components $C_1$, $C_2$, and $C_3$ consisting of three arcs in the minimal three-page presentation of $T_{m+1,m+1}$.
Any pair of these three components form the Hopf link.
Without loss of generality, we take $C_1$ and $C_2$ as drawn in Figure~\ref{fig:Hopf}.

\begin{figure}[h!]
	\includegraphics[width=0.65\textwidth]{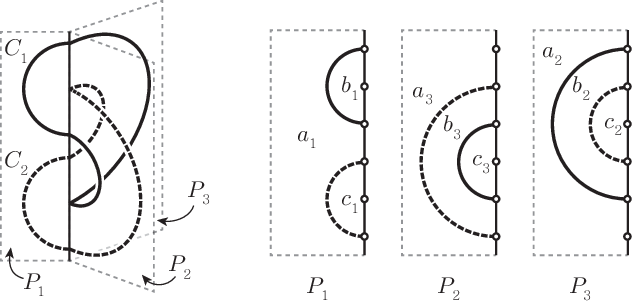}
	\caption{Three cases of $T_{3,3}$}
	\label{fig:Hopf}
\end{figure}

Let $a_i$ ($i=1,2,3$) be the unbounded region of $i$-th page $P_i$, $b_i$ be the bounded region of $P_i$ meeting $C_1$, and $c_i$ be the bounded region of $P_i$ meeting $C_2$.
The arcs of $C_3$ are placed within these regions.
Since the number of arcs in $C_3$ is three, possible combinations of regions are $\{ a_1, a_2, a_3 \}$, $\{ b_1, b_2, b_3 \}$, and $\{ c_1, c_2, c_3 \}$.
However, if $C_3$ passing through $a_1$, $a_2$, and $a_3$, then $C_3$ does not link with both $C_1$ and $C_2$.
Similarly, the remaining cases also imply that $C_3$ does not link with either $C_1$ or $C_2$.
Hence every component of $T_{m+1,m+1}$ except $C_1$ and $C_2$ has at least four arcs in the minimal three-page presentation.
This implies that $\alpha_3(T_{m+1,m+1})=4m+2$.
\end{proof}

\section{Three-page indices for general torus links} \label{sec:torus}

In this section, we prove Theorem~\ref{thm:general} and Theorem~\ref{thm:2q} using braids of links.
A \textit{braid} consists of several strands arranged so that they do not intertwine. Each strand runs from top to bottom, crossing over or under the others according to specific rules.
Typically, a braid is made up of $n$ strands, and the crossings between the strands are represented using a \textit{braid word}.
A braid word is a mathematical expression composed of symbols such as $\sigma_1, \sigma_2, \dots, \sigma_{n-1}$, where each $\sigma_i$ represents a crossing between the $i$-th and $(i+1)$-th strands. In this case, the $(i+1)$-th strand crosses over the $i$-th strand. Conversely, if the $i$-th strand crosses over the $(i+1)$-th strand, it is represented by $\sigma_i^{-1}$.
For further details, refer to ~\cite{A, Alex}.
The \textit{closure} of a braid is connecting the top and bottom ends of the strands in the braid.
The resulting knot or link after closure is called the \textit{closed braid}.
In ~\cite{Alex}, Alexander showed that every link can be expressed as the closed braid.
Now we recall Theorem~\ref{thm:general}.

\begin{T2}
    Let $p$ and $q$ be integers with $2 \leq p \leq q$.
    Then
    $$
    \alpha_3(T_{p,q}) \leq 2p+2q-2.
    $$
\end{T2}

\begin{proof}
Consider a $(p,q)$-torus link $T_{p,q}$ with $2 \leq p < q$.
The braid word of $T_{p,q}$ is the form
\begin{align*}
(\sigma_1 & \sigma_2 ... \sigma_{q-1})^{p} \\
& =(\sigma_1 \sigma_2 ... \sigma_{p-1})^{p}(\sigma_{q-p+1} \sigma_{q-p} ... \sigma_1)(\sigma_{q-p} \sigma_{q-p-1} ... \sigma_2)\cdots(\sigma_{q-1} \sigma_{q-2} ... \sigma_{p-1}).
\end{align*}
According to the braid relations, the first $q$ strings are fully twisted, while the remaining strings are untwisted.
A full twist is depicted by kinked strings.
Thus we obtain the diagram as drawn in Figure~\ref{fig:braid}.

\begin{figure}[h!]
	\includegraphics[width=0.6\textwidth]{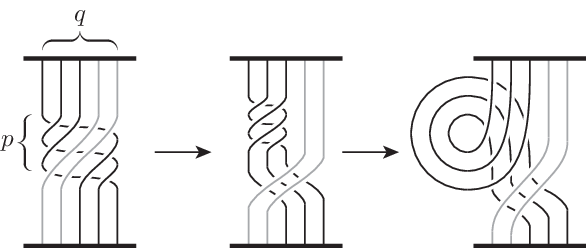}
	\caption{Braid form of $T_{p,q}$}
	\label{fig:braid}
\end{figure}

Next, we draw a line $l$ through kinked strands.
Our goal is to construct a three-page presentation with $l$ as the binding axis.
Push the strands on the bottom to the opposite side of $l$ and close the braid, and we will get a three-page presentation as drawn in Figure~\ref{fig:torus1}.
Here, there are no obstruction in the innermost arcs of the left side of $l$.
Thus we can eliminate this arcs, and hence we obtain the three-page presentation of $T_{p,q}$ with $2(p+q-1)$ arcs.

\begin{figure}[h!]
	\includegraphics[width=0.95\textwidth]{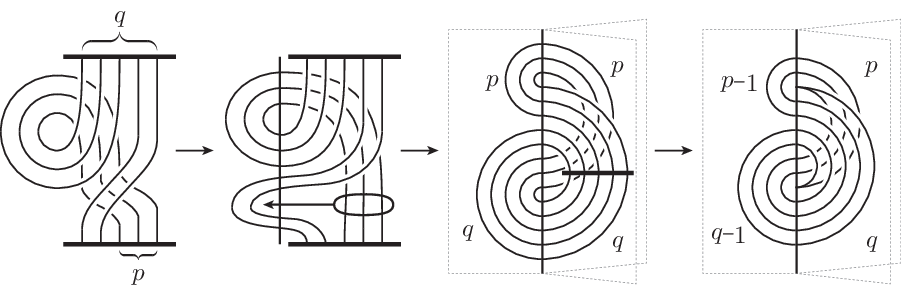}
	\caption{Three-page presentation of torus links}
	\label{fig:torus1}
\end{figure}

Now we take the another braid of $T_{p,q}$ of the form
\begin{align*}
(\sigma_1 \sigma_2 ... \sigma_{p-1})^{q} & =(\sigma_1 \sigma_2 ... \sigma_{p-1})^{\lfloor \frac{q}{p} \rfloor p}(\sigma_1 \sigma_2 ... \sigma_{q-1})^{q-\lfloor \frac{q}{p} \rfloor p}.
\end{align*}
This expression indicates that there are $\lfloor \frac{q}{p} \rfloor$ full twist of $p$ strings.
If we replace all the full twists with kinked strings and convert the remaining part as described above, we obtain a three-page presentation with $2(p+q-1)$ arcs as drawn in Figure~\ref{fig:torus2}.

\begin{figure}[h!]
	\includegraphics[width=0.95\textwidth]{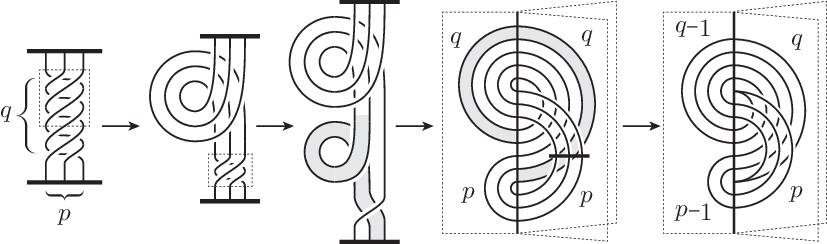}
	\caption{Three-page presentation of torus links}
	\label{fig:torus2}
\end{figure}

By the above process, we transform the braid form of $T_{p,q}$ to the three-page presentation with $2(p+q-1)$ arcs.

\end{proof}

If $q$ is greater than or equal to $2p$, then the upper bound can be improved through a simple trick.
Building on the proof of the previous theorem, we prove Theorem~\ref{thm:2q}.

\begin{T3}
    For any integers $p$ and $q$ with $2 \leq p < 2p \leq q$,
    $$
    \alpha_3(T_{p,q}) \leq 2p+2q-3.
    $$
\end{T3}

\begin{proof}
Consider a $(p,q)$-torus link $T_{p,q}$ with $2 \leq p < 2p \leq q$.
The braid word of $T_{p,q}$ is given by the following form:
$$
(\sigma_1 \sigma_2 ... \sigma_{q-1})^{p}
$$
which expands to:
$$
(\sigma_p \sigma_{p-1} ... \sigma_1)(\sigma_{p+1} \sigma_{p} ... \sigma_2) \cdots (\sigma_{q-1} \sigma_{q-2} ... \sigma_{q-p})(\sigma_{q-p} \sigma_{q-p+1} ... \sigma_{q-1})^{p}
$$
as drawn in the middle of Figure~\ref{fig:2q}.
By fixing the first strand in place and allowing the remaining $p-1$ strands to wrap around it and become kinked, the configuration can be transformed as shown on the right side of Figure~\ref{fig:2q}.

\begin{figure}[h!]
	\includegraphics[width=0.6\textwidth]{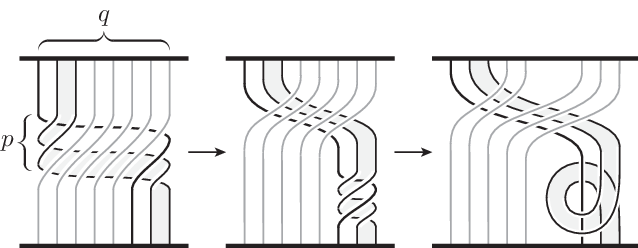}
	\caption{Three-page presentation of torus links}
	\label{fig:2q}
\end{figure}

Now, connect the $q-p$ pairs of endpoints on the left in the leftward direction, and connect the remaining $p$ pairs of endpoints from the right in the rightward direction, so that the configuration matches the left side of Figure\ref{fig:2qthree}.
Since $q$ is greater than or equal to $2p$, the right $p$ strands flow along the kinked arc.
Next, draw a straight line $l$ at the bottom of the closed braid, touching as shown in the figure.
Then, flip the kinked part to the bottom side, as shown in the middle of Figure~\ref{fig:2qthree}.
Note that arcs passing over at any crossing on the line $l$ always pass over. 
Therefore, by placing the arcs below $l$ on one page, the arcs passing over on another page, and the remaining arcs on the last page, we can obtain a three-page presentation as drawn in the right side of Figure~\ref{fig:2qthree}.

\begin{figure}[h!]
	\includegraphics[width=0.9\textwidth]{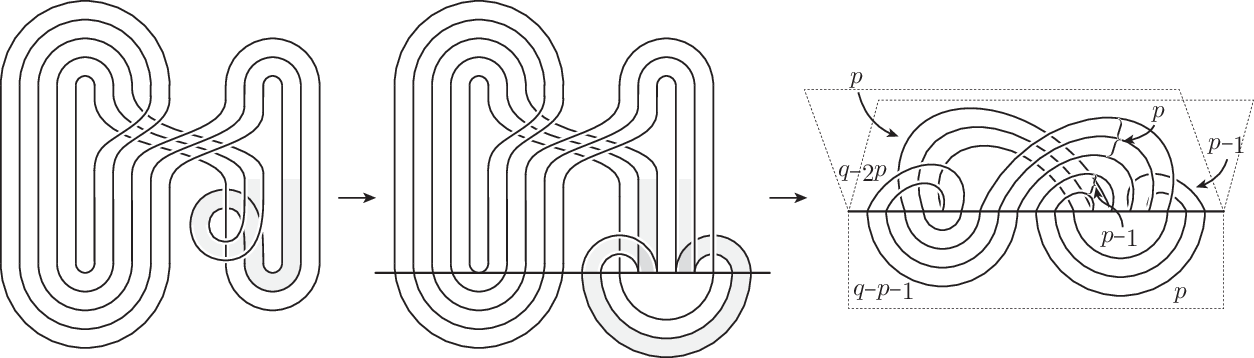}
	\caption{Three-page presentation of torus links}
	\label{fig:2qthree}
\end{figure}

On the bottom page, there are $(q-p-1) + p = q-1$ arcs; on the front page, there are $(q-2p) + (2p-1) = q-1$ arcs; and on the back page, there are $p + (p-1) = 2p-1$ arcs. Therefore, the total number of arcs is 
$$(q-1) + (q-1) + (2p-1) = 2q+2p-3.$$
Thus we obtain the result.
\end{proof}

\section*{acknowledgement}

The corresponding author(Hyungkee Yoo) was supported by Basic Science Research Program of the National Research Foundation of Korea (NRF) grant funded by the Korea government Ministry of Education (RS-2023-00244488).

\end{document}